\title{Wired Cycle-Breaking Dynamics for Uniform Spanning Forests}
\author{Tom Hutchcroft}
\address{University of British Columbia}
\email{thutch@math.ubc.ca}
\date{\today}
\crefname{theorem}{Theorem}{Theorems}
\crefname{thm}{Theorem}{Theorems}
\crefname{lemma}{Lemma}{Lemmas}
\crefname{lem}{Lemma}{Lemmas}
\crefname{remark}{Remark}{Remarks}
\crefname{prop}{Proposition}{Propositions}
\crefname{defn}{Definition}{Definitions}
\crefname{corollary}{Corollary}{Corollaries}
\crefname{conjecture}{Conjecture}{Conjectures}
\crefname{question}{Question}{Questions}
\crefname{chapter}{Chapter}{Chapters}
\crefname{section}{Section}{Sections}
\crefname{figure}{Figure}{Figures}
\theoremstyle{plain}
\newtheorem{thm}{Theorem}
\newtheorem{lem}[thm]{Lemma}
\newtheorem{corollary}[thm]{Corollary}
\newtheorem{prop}[thm]{Proposition}
\theoremstyle{definition}
\newtheorem{defn}[thm]{Definition}
\newtheorem{example}[thm]{Example}
\theoremstyle{remark}
\numberwithin{equation}{section}
\renewcommand{\P}{\mathbb P}
\newcommand{\Z}{\mathbb Z}
\newcommand{\E}{\mathbb E}
\newcommand{\N}{\mathbb N}
\newcommand{\defo}[1]{\textbf{#1}}
\newcommand{\eqd} {\overset{d}{=}}
\newcommand{\F}{F}
\newcommand{\cF}{\mathcal F}
\newcommand{\cT}{\mathcal T}
\newcommand{\orient}[1]{#1}
\newcommand{\rev}[1]{-#1}
\newcommand{\head}[1]{#1^{{+}}}
\newcommand{\tail}[1]{#1^{{-}}}
\newcommand{\unorient}[1]{#1}
\newcommand{\USF}{\mathsf{USF}}
\newcommand{\WUSF}{{\mathsf{WUSF}}}
\newcommand{\tWUSF}{\mathsf{WUSF}}
\newcommand{\FUSF}{\mathsf{FUSF}}
\newcommand{\UST}{\mathsf{UST}}
\newcommand{\OWUSF}{\mathsf{OWUSF}}
\newcommand{\OUST}{\mathsf{OUST}}
\begin{document}

\begin{abstract}
	We prove that every component of the  wired uniform spanning forest (WUSF) is one-ended almost surely in every transient reversible random graph, removing the bounded degree hypothesis required by earlier results. We deduce that every component of the WUSF is one-ended almost surely in every supercritical Galton-Watson tree, answering a question of Benjamini, Lyons, Peres and Schramm \cite{BLPS}.

	Our proof introduces and exploits a family of Markov chains under which the oriented WUSF is stationary, which we call the \emph{wired cycle-breaking dynamics}. 
\end{abstract}

\maketitle

\section{Introduction}

The \defo{uniform spanning forests} ($\USF$s) of an infinite, locally finite, connected graph $G$ are defined as infinite-volume limits of uniformly chosen random spanning trees of large finite subgraphs of $G$. These limits can be taken with respect to two extremal boundary conditions, \defo{free} and \defo{wired}, giving the \defo{free uniform spanning forest} ($\FUSF$) and \defo{wired uniform spanning forest} ($\WUSF$) respectively (see \cref{sec:WUSF} for detailed definitions). The study of uniform spanning forests was initiated by Pemantle \cite{Pem91}, who, in addition to showing that both limits exist, proved that the wired and free forests coincide in $\Z^d$ for all $d$ and that they are almost surely a single tree if and only if $d \leq 4$. The question of connectivity of the $\WUSF$ was later given a complete answer by Benjamini, Lyons, Peres and Schramm (henceforth referred to as BLPS) in their seminal work \cite{BLPS}, in which they proved that the $\WUSF$ of a graph is connected if and only if two independent random walks on the graph intersect almost surely \cite{BLPS}*{Theorem 9.2}.

After connectivity, the most basic topological property of a forest is the number of ends its components have. An infinite graph $G$ is said to be \defo{$k$-ended} if, over all finite sets of vertices $W$, the graph $G \setminus W$ formed by deleting $W$ from $G$ has a maximum of $k$ distinct infinite connected components. In particular, an infinite tree is one-ended if and only if it does not contain any simple bi-infinite paths and is two-ended if and only if it contains a unique simple bi-infinite path.

Components of the $\tWUSF$ are known to be one-ended for several large classes of graphs. Again, this problem was first studied by Pemantle \cite{Pem91}, who proved that the $\USF$ on $\Z^d$ has one end for $2 \leq d \leq 4$ and that every component has at most two ends for $d \geq 5$. 
(For $d=1$ the forest is all of $\Z$ and is therefore two-ended.) 
A decade later, BLPS \cite{BLPS}*{Theorem 10.1} completed and extended Pemantle's result, proving in particular that every component of the $\WUSF$ of a Cayley graph is one-ended almost surely if and only if the graph is not itself two-ended.
Their proof was then adapted to random graphs by Aldous and Lyons \cite{AL07}*{Theorem 7.2}, who showed that all $\WUSF$ components are one-ended almost surely in every transient reversible random rooted graph with bounded vertex degrees. Taking a different approach, Lyons, Morris and Schramm \cite{LMS08} gave an isoperimetric condition for one-endedness, from which they deduced that all $\WUSF$ components are one-ended almost surely in every transient transitive graph and every non-amenable graph.

In this paper, we remove the bounded degree assumption from the result of Aldous and Lyons \cite{AL07}. We state our result in the natural generality of reversible random rooted networks. Recall that a \defo{network} is a locally finite, connected (multi)graph $G=(\mathsf{V},\mathsf{E})$ together with a function $c:\mathsf{E} \to (0,\infty)$ assigning a positive \defo{conductance} $c(e)$ to each unoriented edge $e$ of $G$. 
For each vertex $v$, the conductance $c(v)$ of $v$ is defined to be the sum of the conductances of the edges adjacent to $v$, where self-loops are counted twice. Graphs without specified conductances are considered to be networks by setting $c\equiv 1$.  The $\WUSF$ on a network is defined in \cref{sec:WUSF} and reversible random rooted networks are defined in \cref{sec:thm1proof}.

\begin{thm}\label{thm:one-endunimod} Let $(G,\rho)$ be a transient reversible random rooted network with $\E[c(\rho)^{-1}]<\infty$. Then every component of the wired uniform spanning forest of $G$ is one-ended almost surely. \end{thm}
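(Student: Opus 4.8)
The plan is to work throughout with the oriented wired uniform spanning forest (OWUSF), sampled via Wilson's algorithm rooted at infinity: running loop-erased random walks to infinity orients every component so that each vertex has out-degree exactly one and hence possesses a unique forward ray to infinity. The purpose of this orientation is that one-endedness of every component is equivalent to the statement that the \emph{past} of every vertex---the set of vertices whose forward ray passes through it---is finite almost surely. Indeed, a component is two-ended precisely when its bi-infinite trunk is oriented consistently toward one of its two ends, making the other end an infinite past of every trunk vertex. Thus the theorem reduces to showing that infinite pasts occur with probability zero, and I would first carry out the comparatively standard reduction that shows every component has at most two ends almost surely, so that only the two-ended case---equivalently, a consistently oriented bi-infinite trunk---remains to be excluded.

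Since the hypotheses are phrased for reversible rather than unimodular random rooted networks, the first technical step is to pass to the unimodular setting by biasing the root measure by $c(\rho)^{-1}$; the integrability assumption $\E[c(\rho)^{-1}]<\infty$ is exactly what makes this reweighting a finite measure, after which I can freely invoke the mass-transport principle with all the relevant sums convergent. The at-most-two-ends reduction itself should follow from the structure of Wilson's algorithm together with the mass-transport principle, using that the loop-erased walks generating distinct forward rays must merge before reaching infinity whenever their pasts interact.

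The heart of the argument is the construction of the wired cycle-breaking dynamics and the proof that the OWUSF is stationary for them. The dynamics should act on an oriented forest by selecting an edge $e=(x,y)$ according to a conductance-weighted rule, tentatively reorienting $x$ so that its outgoing edge becomes $e$, and then resolving the resulting defect---the unique cycle or doubled out-degree this creates---by deleting the now-redundant edge and propagating the correction along the affected ray. I would establish stationarity by comparing the law of one step of the dynamics with the spatial Markov property of the $\WUSF$, realizing the update as a re-sampling that leaves Wilson's measure invariant. The delicate issue here is to verify that the cycle-breaking correction terminates almost surely and remains measure-preserving even when degrees are unbounded; this is precisely where the earlier bounded-degree hypothesis entered, and where replacing pointwise update-tolerance by a genuine stationary Markov chain is meant to buy the needed flexibility.

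Finally, I would run the stationary dynamics to rule out the two-ended case. Stationarity, combined with the mass-transport principle, yields a conservation law for the flow of mass induced by one step of the dynamics across any fixed vertex or edge. A consistently oriented bi-infinite trunk would force the dynamics to transport a net positive amount of mass persistently toward the trunk's source end, with the total expected transported mass controlled by $\E[c(\rho)^{-1}]$; stationarity forbids such an unbalanced flow, producing the contradiction. I expect the main obstacle to be the third step---proving stationarity while controlling the propagation of cycle-breaking corrections in the unbounded-degree regime---with the convergence of the mass-transport sums, guaranteed by the integrability hypothesis, a secondary technical point.
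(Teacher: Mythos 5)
Your first two steps track the paper: biasing the root law by $c(\rho)^{-1}$ (this is exactly where $\E[c(\rho)^{-1}]<\infty$ enters) to obtain a unimodular network and invoke the Aldous--Lyons results giving at most two ends per component, and then establishing stationarity of the $\OWUSF$ under the wired cycle-breaking dynamics (\cref{prop:transientupdate}). One remark on the latter: you misplace the difficulty. There is no ``propagation of corrections'' or termination issue to control, even with unbounded degrees --- the update is a single local move (add $e$, delete one edge $d$ at $\tail{e}$, and, when $\head{e}$ lies in the past of $\tail{e}$, reverse the orientation along the \emph{finite} directed path from $\head{e}$ to $\tail{e}$). The actual work in the stationarity proof is a detailed-balance computation for the oriented UST on the wired exhaustion $G_n^*$, plus a convergence lemma showing that $\orient{T_n}$ and $\orient{\F}$ asymptotically agree about whether $\head{E}$ is in the past of $v$; transience is used precisely there, via Wilson's algorithm.

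The genuine gap is your final step. You propose to exclude two-ended components by a mass-transport ``conservation law'' for the flow induced by the dynamics, but as sketched this argument never uses transience, and without it no such argument can succeed: on $\Z$ (a perfectly good reversible random rooted network, merely recurrent) the oriented WUSF is the consistently oriented line, a two-ended trunk along which every vertex has in-degree and out-degree exactly one, so all local mass-balance identities hold and stationarity plus unimodularity alone forbid nothing. Moreover, you treat ``the two-ended case'' as a single case, whereas the paper must --- and does --- split it. The dynamics are used only through update-tolerance (\cref{cor:updatetolerance}) to prove \cref{thm:threeends}: if \emph{two or more} two-ended components exist, updating along a path from one such component into the trunk of another manufactures a three-ended component, contradicting the at-most-two-ends bound. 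This mechanism is powerless against a \emph{unique} two-ended component, since attaching one-ended components to it only adds finite pasts and cannot create a third end. That last case is handled in the paper by an entirely different argument, adapted from BLPS: stationarity of the environment $(G,X_n,\F)$ seen from the random walk, the observation via Wilson's algorithm started at $\rho$ and $X_n$ that $\P(\rho \text{ and } X_n \text{ in trunk}) \leq 2\,\P(\text{walk from } X_n \text{ hits } \rho)$ (the two hitting terms being equal by reversibility), transience sending this to zero, and the Ergodic Theorem converting the vanishing trunk-visit density into $\E\left[\P(\rho \text{ in trunk}\mid\mathcal{I})^2\right]=0$, i.e.\ $\P(\rho \text{ in trunk})=0$. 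Your proposal contains no substitute for this trunk-density argument, so the decisive case of a single two-ended component remains unproved.
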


The condition that the expected inverse conductance of the root is finite is always satisfied by graphs, for which $c(\rho)=\deg(\rho)\geq1$. In Example \ref{ex:counterexample} we show that the theorem can fail in the absence of this condition.

\cref{thm:one-endunimod} applies (indirectly) to supercritical Galton-Watson trees conditioned to survive, answering positively Question 15.4 of BLPS \cite{BLPS}.

\begin{corollary}\label{cor:GW} Let $T$ be a supercritical Galton-Watson tree conditioned to survive. Then every component of the wired uniform spanning forest of $T$ is one-ended almost surely. \end{corollary}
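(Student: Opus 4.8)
The plan is to deduce \cref{cor:GW} from \cref{thm:one-endunimod}. The main difficulty is that a supercritical Galton-Watson tree $T$ rooted at its root is \emph{not} a reversible random rooted graph: under the random walk the root is atypical, as it has no parent. I would therefore apply \cref{thm:one-endunimod} not to $T$ directly but to its unimodularization, the \emph{augmented Galton-Watson tree} $\mathsf{AGW}$, in which the root is given one additional neighbour that is the root of an independent Galton-Watson tree, so that the root is distributed like a typical vertex, having a single parent-edge together with an offspring-distributed number of children. It is classical (Aldous-Lyons) that $\mathsf{AGW}$ is unimodular and hence defines a reversible random rooted graph, and this property is preserved upon conditioning on the reroot-invariant event that the tree is infinite.

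I would first verify the hypotheses of \cref{thm:one-endunimod} for $\mathsf{AGW}$ conditioned to be infinite. Since we work with a graph we have $c\equiv 1$ and $c(\rho)=\deg(\rho)\geq 1$, so that $\E[c(\rho)^{-1}]\leq 1<\infty$ automatically. Transience holds because a supercritical Galton-Watson tree has branching number equal to its mean $m>1$ almost surely on the event of survival, and simple random walk on a tree of branching number exceeding $1$ is transient; attaching the extra branch does not affect this. \cref{thm:one-endunimod} then yields that almost surely every component of the wired uniform spanning forest of the augmented tree is one-ended.

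It remains to transfer this conclusion to $T$ conditioned to survive. The key observation is that if one samples $\mathsf{AGW}$ conditioned to be infinite, conditions further on the positive-probability event that the subtree spanned by the root and its \emph{children}-branches is infinite, and then deletes the extra parent-branch together with its incident edge, the resulting rooted tree has exactly the law of $T$ conditioned to survive; indeed the parent-branch is independent of this event, so its removal leaves precisely the children-part conditioned on survival. The intermediate conditioning is on a positive-probability event and so preserves the almost-sure one-endedness. The genuine obstacle is the final deletion of the (possibly infinite) parent-branch: the laws of $\mathsf{AGW}$ and of $T$ conditioned to survive can be mutually singular -- already for the binary offspring distribution they are the $3$-regular tree and the rooted binary tree respectively -- so one cannot simply invoke absolute continuity. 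I would handle this by coupling the two wired uniform spanning forests through Wilson's algorithm rooted at infinity, noting that deleting a single branch at the root alters the random-walk escape mechanism only near the root, and arguing that it cannot create a bi-infinite path in any component, so that one-endedness is inherited by the smaller tree. Establishing this insensitivity rigorously -- equivalently, that the component of each vertex remains one-ended once the branch is removed -- is the step I expect to require the most care.
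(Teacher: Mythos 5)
Your reduction to the augmented Galton--Watson tree is exactly the paper's route: the paper likewise forms the augmented tree by joining two independent Galton--Watson trees $T_1,T_2$ by an edge $e$, invokes reversibility (Lyons--Pemantle--Peres), checks transience on the survival event, and applies \cref{thm:one-endunimod}. Your hypothesis-checking is fine. The genuine gap is in the transfer step, which you yourself flag as the hard part and then leave as a sketch. The coupling you propose does not obviously exist: deleting the parent branch does \emph{not} perturb the walk ``only near the root''. A walk on the augmented tree may make arbitrarily long excursions into the parent branch, or escape to infinity through it entirely (this happens with positive probability, by transience of $T_2$), so the loop-erasures in Wilson's algorithm on the two graphs differ globally, and the $\WUSF$ of $T_1$ is in no simple sense a restriction or small perturbation of the $\WUSF$ of the augmented tree. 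The assertion that the deletion ``cannot create a bi-infinite path'' is precisely the statement you need to prove, and your proposal supplies no mechanism for it. (A second, smaller slip: you condition only on the children-side being infinite; if the parent branch is finite then $e$ lies in the forest almost surely, since all $\WUSF$ components are infinite, so the parent-side must be conditioned infinite as well for the transfer to go through.)

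The paper closes this gap not by coupling but by the spatial Markov property of the $\WUSF$, \cite{BLPS}*{Proposition 4.2}: if an edge $e$ has positive probability of being absent from the forest $\F$, then conditionally on $e\notin \F$ the forest has law $\WUSF_{G\setminus\{e\}}$. On the positive-probability event that $T_1$ and $T_2$ are both infinite, both are transient, and running Wilson's algorithm rooted at infinity from the two endpoints of $e$ shows that $\P(e\notin \F)>0$; conditionally on $e\notin\F$, the forest is the union of \emph{independent} samples of $\WUSF_{T_1}$ and $\WUSF_{T_2}$. Since conditioning on a positive-probability event preserves almost-sure statements, the one-endedness of all components established by \cref{thm:one-endunimod} for the augmented tree passes to $\WUSF_{T_1}$ on the event that $T_1$ is infinite, which is the corollary. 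This one known proposition replaces your entire proposed coupling argument; without it (or an honest substitute), your final step is unproven and the proof is incomplete.
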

Previously, this was known only for supercritical Galton-Watson trees with  offspring distribution either bounded, in which case the result follows as a corollary to the theorem of Aldous and Lyons \cite{AL07}, or supported on a subset of $[2,\infty)$, in which case the tree is non-amenable and we may apply the theorem of Lyons, Morris and Schramm \cite{LMS08}.

Our proof introduces a new and simple method, outlined as follows. For every transient network, we define a procedure to `update an oriented forest at an edge', in which the edge is added to the forest while another edge is deleted. Updating oriented forests at randomly chosen edges defines a family of Markov chains on oriented spanning forests, which we call the \emph{wired cycle-breaking dynamics}, for which the oriented wired uniform spanning forest measure is stationary (\cref{prop:transientupdate}). This stationarity allows us to prove the following theorem, from which  we show \cref{thm:one-endunimod} to follow by known methods.
\begin{thm}\label{thm:threeends} Let $G$ be any network. If the wired uniform spanning forest of $G$ contains more than one two-ended component with positive probability, then it contains a component with three or more ends with positive probability. \end{thm}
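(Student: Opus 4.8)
The plan is to pass to the \emph{oriented} wired uniform spanning forest and exploit the stationarity of $\OWUSF$ under the wired cycle-breaking dynamics established in \cref{prop:transientupdate}. First I would dispose of the recurrent case: if $G$ is recurrent then the $\WUSF$ is a single tree, so it cannot contain two distinct components and the statement holds vacuously; hence assume $G$ is transient, so that the dynamics is available. Recall the structure of a sample $F$ of $\OWUSF$: every vertex has out-degree exactly one, so following out-edges from any vertex produces a ray to infinity. Writing $A(x)$ for the set of vertices whose forward orbit passes through $x$ (the \emph{ancestors} of $x$, including $x$ itself), a short argument shows that, within a one- or two-ended component, $A(x)$ is infinite if and only if $x$ lies on the bi-infinite spine of a two-ended component: in a one-ended component an infinite $A(x)$ would give a backward ray converging to the same (unique) end as the forward ray, which is impossible in a tree, and by the same reasoning a two-ended component can have only finite subtrees hanging off its spine.

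The combinatorial engine is the elementary observation that gluing a one-ended tree to a two-ended tree along a single edge produces a tree with exactly three ends. The cycle-breaking update at a directed edge $x\to y$, in the generic case where no cycle is created, detaches $A(x)$ from the old component of $x$ and re-attaches it to the component of $y$ via the edge $x\to y$. Combining these facts: if $x$ lies on the spine of a two-ended component and $y$ lies in a \emph{different} two-ended component, then the single update at $x\to y$ creates a component with three ends. This is exactly where the hypothesis of two two-ended components enters — we need a two-ended \emph{target} to which to attach a one-ended spine-end, since attaching such a piece to a one-ended component yields only two ends.

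Suppose for contradiction that the $\WUSF$ never has a component with three or more ends. By \cref{prop:transientupdate} the law is preserved by the dynamics, so for every $k$ the forest $F_k$ after $k$ steps satisfies $F_k\eqd \OWUSF$ and hence almost surely has only one- and two-ended components. I would contradict this by exhibiting, on the positive-probability event that $F_0$ has two distinct two-ended components $C_1,C_2$, a finite sequence of updates — each a prescribed directed edge, hence of positive probability under the dynamics — that produces a three-ended component; then $\mathbb P(F_k\text{ has a }\ge 3\text{-ended component})>0$ for some $k$, contradicting stationarity. The mechanism is a \emph{dragging} procedure: starting from a spine vertex $s$ of $C_1$ and updating along a path $s=w_0,w_1,\dots,w_m$ in $G$ toward $C_2$, each update $w_{i-1}\to w_i$ carries the infinite one-ended ancestor set along the path — one checks that it stays one-ended, since the components absorbed en route are one-ended and contribute only finite ancestor sets — until it reaches $C_2$ and, being attached to a two-ended component, creates the third end. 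Decomposing the good event according to the (measurable) length of the chosen path and using that each prescribed edge is selected with positive probability converts this into a strictly positive probability for a fixed $k$.

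The main obstacle is to guarantee that such a dragging sequence genuinely exists with positive probability using only the edges present in $G$. Two difficulties must be resolved: $C_1$ and $C_2$ need not be adjacent, and — more seriously — the spine of a two-ended component may be \emph{buried}, with every edge leaving the component emanating from a finite decoration rather than from the spine, so that the drag cannot even be started at a spine vertex. Overcoming this requires surfacing a spine-end to an edge leaving its component, and I expect this to be the technical heart of the argument: it should be handled by a more careful analysis of how the cycle-breaking updates (including the genuine cycle-breaking case $y\in A(x)$, where the wired boundary enters) reposition spines, or by feeding in an indistinguishability-type property of $\WUSF$ components. The remaining points — reading off the precise effect of each update from the definition underlying \cref{prop:transientupdate}, ensuring no accidental cycle forms and that $C_2$ survives intact until the final step, and the measurable selection of the update sequence — are routine once this is in place.
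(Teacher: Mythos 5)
There is a genuine gap, and you have correctly located it yourself: the ``buried spine'' problem is fatal to your construction as stated, and the fix is not a patch but a reversal of the whole dragging direction. Your drag pushes the infinite ancestor set $A(s)$ \emph{forward} from a spine vertex $s$ of $C_1$ toward $C_2$, so the very first update must detach an infinite past, which requires a $G$-edge from a spine vertex of $C_1$ whose head lies outside the (relevant part of the) component; nothing in the hypotheses guarantees such an edge exists, and neither ``more careful analysis of how updates reposition spines'' nor an indistinguishability input is how this is resolved. The paper's proof runs the construction the other way around: it fixes a path $\langle\gamma_i\rangle_{i=0}^n$ from an \emph{arbitrary} vertex $\gamma_0$ of the first two-ended component, truncated at its \emph{first} hit $\gamma_n$ of the second component's trunk (so $\gamma_i$ is off that trunk for $i<n$), and updates at the path edges oriented \emph{backwards}, $\tail{e_i}=\gamma_i$, $\head{e_i}=\gamma_{i-1}$. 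Each intermediate update then absorbs the past of $\gamma_i$ --- which never meets the trunk of $\gamma_n$'s component, since forward paths from trunk vertices stay in the trunk --- into the growing component containing $C_1$, which therefore keeps its two ends; no infinite set is moved until the single final step, where updating at $e_n$ detaches the past of the trunk vertex $\gamma_n$ (containing a trunk ray) and attaches it to the $\geq 2$-ended growing component, giving three ends. The edge needed there, from a trunk vertex to a non-trunk neighbour, \emph{always} exists, because any path from $C_1$ must enter the trunk somewhere; this is exactly what dissolves your obstacle. Note also that in this scheme intermediate vertices may freely lie in decorations of $C_2$ or anywhere off the trunk, so your secondary worries (accidental cycles, $C_2$ surviving intact) are handled by the same event choice: same-component updates merely break a cycle and preserve the end count.

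The rest of your machinery does match the paper. Reducing to the transient case, the stationarity of $\OWUSF$ under the dynamics (\cref{prop:transientupdate}), the observation that $A(x)$ is infinite precisely when $x$ lies on the trunk of a two-ended component, the gluing count (one-ended piece attached to a two-ended tree gives three ends), and your contradiction-via-stationarity framing are all sound; the last is equivalent to the paper's direct estimate, which iterates update-tolerance (\cref{cor:updatetolerance}) along the fixed sequence $e_1,\dots,e_n$ to lower-bound the probability of a $\geq 3$-ended component by $\OWUSF_G(\mathscr{A}_\gamma)\prod_{i=1}^n c(e_i)/c(\gamma_i)>0$, avoiding any measurable-selection or path-length decomposition. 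One further small simplification you missed even within your own scheme: attaching a one-ended dragged piece to \emph{any} vertex of a two-ended component already creates three ends, so the target spine never needs to be reached --- only the source spine was ever the problem.
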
 

The case of \emph{recurrent} reversible random rooted graphs remains open, even under the assumption of bounded degree. In this case, it should be that the single tree of the $\WUSF$ has the same number of ends as the graph (this prediction appears in \cite{AL07}). BLPS proved this for transitive recurrent graphs \cite{BLPS}*{Theorem 10.6}. 



\subsection{Consequences} The one-endedness of $\WUSF$ components has consequences of fundamental importance for the \emph{Abelian sandpile model}.  J\'arai and Werning~\cite{JarWer14} proved that the infinite-volume limit of the sandpile measures exists on every graph for which every component of the $\WUSF$ is one-ended almost surely. Furthermore, J\'arai and Redig \cite{JarRed08} proved that, for any graph which is both transient and has one-ended $\WUSF$ components, the sandpile configuration obtained by adding a single grain of sand to the infinite-volume random sandpile can be stabilized by finitely many topplings (their proof is given for $\Z^d$ but extends to this setting, see \cite{Jar14}). Thus, a consequence of \cref{thm:one-endunimod} is that these properties hold for the Abelian sandpile model on transient reversible random graphs of unbounded degree.

\cref{thm:one-endunimod} also has several interesting consequences for random plane graphs, which we address in upcoming work with Angel, Nachmias and Ray \cite{unimodular2}. In particular, we deduce from \cref{thm:one-endunimod} that every Benjamini-Schramm limit of finite planar graphs is almost surely Liouville, i.e.\ does not admit non-constant bounded harmonic functions.






\section{The Wired Uniform Spanning Forest}\label{sec:WUSF}

In this section we briefly define the wired uniform spanning forest and introduce the properties that we will need. For a comprehensive treatment of uniform spanning trees and forests, as well as a detailed history of the subject, we refer the reader to Chapters 4 and 10 of \cite{LP:book}.  

\medskip
\paragraph{\textbf{Notation and orientation}} We do not distinguish notationally between oriented and unoriented trees, forests or edges. 
Whether or not a tree, forest or edge is oriented will be clear from context. Edges $e$ are oriented from their tail $\tail{e}$ to their head $\head{e}$, and have reversal $-e$. Given an oriented tree or forest in a graph,  we define the \defo{past} of each vertex $v$ to be the set of vertices $u$ for which there is a directed path from $u$ to $v$ in the oriented tree or forest.
\medskip




For a finite graph $G$, we write $\UST_G$ for the uniform measure on the set of spanning trees (i.e.\ connected cycle-free subgraphs containing every vertex) of $G$, considered for measure-theoretic purposes to be functions from $\mathsf{E}$ to $\{0,1\}$. More generally, if $G$ is a finite network, we define $\UST_G$ to be the probability measure on spanning trees of $G$ for which the measure of a tree $t$ is proportional to the product of the conductances of its edges $\prod_{e\in t} c(e)$.

There are two extremal (with respect to stochastic ordering) ways to define infinite volume limits of the uniform spanning tree measures.
Let $G$ be an infinite network and let $G_n$ be an increasing sequence of finite subnetworks (i.e.\ subgraphs of $G$ with inherited conductances) such : that $\bigcup G_n = G$, which we call an \defo{exhaustion} of $G$.  The weak limit of the $\UST_{G_n}$ is known as the \defo{free uniform spanning forest}: for each finite subset $S\subset \mathsf{E}$,
\[ \FUSF_G(S \subseteq \F) := \lim_{n \to \infty} \UST_{G_n}(S \subseteq T). \]
Alternatively, at each step of the exhaustion we define a network $G_n^*$ by identifying (`wiring') every vertex of $G\setminus G_n$ into a single vertex $\partial_n$ and deleting all the self-loops that are created, and define the \defo{wired uniform
spanning forest} to be the weak limit 
\[ \WUSF_G(S \subseteq \F) := \lim_{n\to\infty} \UST_{G_n^*}(S \subseteq T). \]
 Both limits were shown (implicitly) to exist for every network and every choice of exhaustion by Pemantle \cite{Pem91}, although the $\WUSF$ was not defined explicitly until the work of H\"aggstr\"om \cite{Hagg95}. As a consequence, the limits do not depend on the choice of exhaustion. Both measures are supported on spanning forests (i.e.\ cycle-free subgraphs containing every vertex) of $G$ for which every connected component is infinite.  The $\WUSF$ is usually much more tractable, thanks in part to Wilson's algorithm rooted at infinity, which both connects the $\WUSF$ to loop-erased random walk and allows us to sample the $\WUSF$ on an infinite network directly rather than passing to an exhaustion.


\medskip

\defo{Wilson's algorithm} \cite{Wilson96,ProppWilson} is a remarkable method of generating the $\UST$ on a finite or recurrent network by joining together loop-erased random walks. It was extended to generate the $\WUSF$ on transient networks by BLPS \cite{BLPS}.
Let $G$ be a network, and let $\gamma$ be a path in $G$ that is either finite or transient, i.e.\ visits each vertex of $G$ at most finitely many times. The \defo{loop-erasure} $\textsf{LE}( \gamma )$ is formed by erasing cycles from $\gamma$ chronologically as they are created. Formally, $\textsf{LE}(\gamma)_i  = \gamma_{t_i}$ where the times $t_i$ are defined recursively by $t_0 = 0$ and $t_i = 1+ \max \{ t \geq t_{i-1} : \gamma_t = \gamma_{t_{i-1}}\}$. 

Let $\{v_j : j \in \mathbb{N} \}$ be an enumeration of the vertices of $G$ and define a sequence of forests in $G$ as follows:
\begin{enumerate} \item If $G$ is finite or recurrent, choose a root vertex $v_0$ and let $\F_0$ include $v_0$ and no edges (in which case we call the algorithm \defo{Wilson's algorithm rooted at $v_0$}). If $G$ is transient, let $\F_0 = \emptyset$ (in which case we call the algorithm \defo{Wilson's algorithm rooted at infinity}).
\item Given $\F_i$, start an independent random walk from $v_{i+1}$ stopped if and when it hits the set of vertices already included in $\F_i$. 

\item Form the loop-erasure of this random walk path and let $\F_{i+1}$ be the union of $\F_i$ with this loop-erased path.
\item Let $F = \bigcup F_i$.
\end{enumerate} This is Wilson's algorithm: the resulting forest $\F$ has law $\UST_G$ in the finite case \cite{Wilson96} and $\WUSF_G$ in the infinite case \cite{BLPS}, and is independent of the choice of enumeration.  

We also consider oriented spanning trees and forests.
Let $\mathsf{OUST}_{G_n^*}$ denote the law of the uniform spanning tree of $G_n^*$ oriented towards the boundary vertex $\partial_n$, so that every vertex of $G_n^*$ other than $\partial_n$ has exactly one outward-pointing oriented edge in the tree. Wilson's algorithm on $G_n^*$ rooted at $\partial_n$ may be modified to produce an oriented tree with law $\OUST_{G_n^*}$ by considering the loop-erased paths in step (2) to be oriented chronologically. If $G$ is transient, making the same modification to Wilson's algorithm rooted at infinity yields a random oriented forest, known as the \defo{oriented wired uniform spanning forest} \cite{BLPS} of $G$ and denoted $\OWUSF_G$.
The proof of the correctness of Wilson's algorithm rooted at infinity  \cite{BLPS}*{Theorem 5.1} also shows that, when $G_n$ is an exhaustion of a transient network $G$,
the measures $\mathsf{OUST}_{G_n^*}$ converge weakly to $\OWUSF_G$.


\section{Wired Cycle-Breaking Dynamics}\label{sec:Markov}

 Let $G$ be an infinite transient network and let $\orient{\cF}(G)$ denote the set of oriented spanning forests $\orient{f}$ of $G$ such that every vertex has exactly one outward-pointing edge in $\orient{f}$.
For each $\orient{f} \in \orient{\cF}(G)$ and  oriented edge $\orient{e}$ of $G$, the update $U(\orient{f},\orient{e})\in\cF(G)$ of $\orient{f}$ is defined by the following procedure:

\begin{defn}[Updating $\orient{f}$ at $e$] \label{def:update}
If $\orient{e}$ or its reversal $\rev{e}$ is already included in $\orient{f}$, or is a self-loop, let $U(\orient{f},\orient{e})=\orient{f}$. Otherwise,
\begin{itemize}
\item If $\head{e}$ is in the past of $\tail{e}$ in $\orient{f}$, so that there is a directed path $\langle \orient{e_1},\ldots,\orient{e_k},\orient{d} \hspace{.1em}\rangle$ from $\head{e}$ to $\tail{e}$ in $\orient{f}$, let \[U(\orient{f},\orient{e}) =  \orient{f} \cup \{ \rev{e},\rev{e_1}, \ldots, \rev{e_k} \} \setminus \{ \orient{d},\orient{e_k},\ldots,\orient{e_1}\}.\]
\item Otherwise, if $\head{e}$ is not in the past of $\tail{e}$ in $\orient{f}$, let $d$ be the unique oriented edge of $\orient{f}$ with $\tail{d}=\tail{e}$ and let $U(\orient{f},\orient{e})=\orient{f}\cup\{\orient{e}\}\setminus\{\orient{d}\}$.
\end{itemize}
Note that in either case, \emph{as unoriented forests}, we have simply
\[\unorient{U(\orient{f},\orient{e})}=\unorient{f}\cup\{\unorient{e}\}\setminus\{\unorient{d}\}.\]
\end{defn}

Let $v$ be a vertex of $G$. We define the \textbf{wired cycle-breaking dynamics rooted at $v$} to be the Markov chain on $\orient{\cF}(G)$ with transition probabilities
\[ p^v(\orient{f_0},\orient{f_1})= \frac{1}{c(v)}c(\{\orient{e} : \tail{e}=v \text{ and } U(\orient{f_0},\orient{e})=\orient{f_1}\}).\]
That is, we perform a step of the dynamics by choosing an oriented edge randomly from the set $\{\orient{e}:\tail{e}=v\}$ with probability proportional to its conductance, and then updating at this edge. Dynamics of this form for the $\UST$ on \emph{finite} graphs are well-known, see \cite{LP:book}*{\S 4.4}.

To explain our choice of name for these dynamics, as well as our choice to consider oriented forests, let us give a second, equivalent, description of the update rule. 
\begin{quote}
If $\orient{e}$ or its reversal $\rev{e}$ is already included in $\orient{f}$, or is a self-loop, let $U(\orient{f},\orient{e})=\orient{f}$. Otherwise,
\begin{itemize}[leftmargin=*]
\item If $\head{e}$ and $\tail{e}$ are in the same component of $\orient{f}$, then $\unorient{f}\cup\unorient{e}$  contains a (not necessarily oriented)  cycle. Break this cycle by deleting the unique edge $\orient{d}$ of $f$ that is both contained in this cycle and adjacent to $\tail{e}$, letting $U(\orient{f},\orient{e})=f\cup\{e\}\setminus\{d\}$. 

If $\head{e}$ was in the past of $\tail{e}$ in $f$, so that there is an oriented path from  $\tail{e}$ to $\head{d}$ in $U(f,e)$, correct the orientation of $U(f,e)$ by reversing each edge in this path.
\item If $\head{e}$ and $\tail{e}$ are not in the same component of $\orient{f}$, we consider $e$ together with the two infinite directed  paths in $\orient{f}$ beginning at $\tail{e}$ and $\head{e}$ to constitute a \textbf{wired cycle}, or `cycle through infinity'. Break this wired cycle by deleting the unique edge $\orient{d}$ in $\orient{f}$ such that $\tail{d}=\tail{e}$, letting $U(f,e)=f\cup\{e\}\setminus\{d\}$. 
\end{itemize}
\end{quote}
The benefit of taking our forests to be oriented is that it allows us to define these wired cycles unambiguously. 
 If every component of the $\WUSF$ on $G$ is one-ended almost surely, then there is a unique infinite simple path from each of $\tail{e}$ and $\head{e}$ to infinity, so that wired cycles are already defined unambiguously and the update rule may be defined without reference to an orientation.

\begin{prop}\label{prop:transientupdate} Let $G$ be an infinite transient network. Then for each vertex $v$ of $G$, $\OWUSF_G$ is a stationary measure for the wired cycle-breaking dynamics rooted at $v$, i.e.~ for $p^v(\hspace{.1em}\cdot\hspace{.225em},\hspace{.05em}\cdot\hspace{.15em})$.\end{prop}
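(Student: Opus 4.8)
The plan is to deduce the statement from the corresponding fact on finite wired networks by passing to the limit along an exhaustion. Fix an exhaustion $G_n$ of $G$, let $H_n = G_n^*$ be the associated finite wired networks, and recall from the end of \cref{sec:WUSF} that $\OUST_{H_n} \Rightarrow \OWUSF_G$ weakly. Fix $v$ and take $n$ large enough that $v \in G_n$. The observation that links the two settings is that the wired cycle `through infinity' used to define the update on $G$ becomes, on $H_n$, an honest cycle through the boundary vertex $\partial_n$: the two forward rays from $\tail e$ and $\head e$ in an element of $\cF(G)$ correspond to the two directed paths from $\tail e$ and $\head e$ to $\partial_n$ in a tree $t \in \cF(H_n)$. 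Consequently the rule of \cref{def:update} applies verbatim on each $H_n$, and I will write $U_{H_n}$ and $U_G$ for the two updates.

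On the finite networks the dynamics is of the classical type treated in \cite{LP:book}*{\S4.4}, and I would in fact verify the stronger statement that $\OUST_{H_n}$ is \emph{reversible} for $p^v$. Writing $\pi(t) \propto \prod_{e \in t} c(e)$ for the weight of an oriented spanning tree, the point is that an update at an edge $\orient e$ with $\tail e = v$ is an involution: if $t_1 = U_{H_n}(t_0,\orient e)$ then, as unoriented trees, $t_1 = t_0 \cup \{e\} \setminus \{d\}$ for the deleted edge $d$, and the oriented edge $\orient{e'}$ with $\tail{e'}=v$ and the same underlying edge as $d$ is the unique edge with $U_{H_n}(t_1,\orient{e'}) = t_0$ (the two cases of \cref{def:update} simply record whether $d$ points into or out of $v$, and in each case $\orient{e'}$ is $d$ oriented away from $v$). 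Since $\orient e$ is likewise the unique edge out of $v$ producing $t_1$ from $t_0$, we get $p^v(t_0,t_1) = c(e)/c(v)$ and $p^v(t_1,t_0) = c(d)/c(v)$, while $\pi(t_1)/\pi(t_0) = c(e)/c(d)$; the detailed balance equation then collapses to the identity $c(e) = c(e)$.

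It is cleanest to encode reversibility as symmetry of a measure on pairs. Let $\nu_n$ be the law of $(F_0,F_1)$ where $F_0 \sim \OUST_{H_n}$ and $F_1$ is one step of $p^v$ from $F_0$, and let $\nu$ be the analogous law built from $\OWUSF_G$ and $U_G$. Reversibility on $H_n$ is exactly the invariance of $\nu_n$ under exchanging coordinates. Since coordinate-symmetry is preserved under weak limits, it suffices to prove $\nu_n \Rightarrow \nu$; this forces $\nu$ to be symmetric, i.e.\ $\OWUSF_G$ to be reversible, and in particular stationary, for $p^v$.

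The heart of the argument, and the step I expect to be the main obstacle, is the convergence $\nu_n \Rightarrow \nu$, which amounts to showing that the finite updates converge to $U_G$ when averaged against the converging tree measures. Testing against a cylinder event that depends on finitely many edges in each coordinate, the `real cycle' case causes no trouble: there the update reverses a finite directed path from $\head e$ to $v$, which for configurations agreeing on a large ball $B_r$ is already determined inside $B_r$. The genuine difficulty is the `wired cycle' case, where deciding whether $\head e$ lies in the past of $v$ --- and hence which edge is deleted near $v$ --- depends a priori on the forward ray from $\head e$ all the way out to infinity, a non-local dependence that is not continuous for arbitrary approximating sequences. To tame it I would use transience through Wilson's algorithm: the forward ray from $\head e$ under $\OUST_{H_n}$ is a loop-erased random walk, and transience of $G$ makes the probability that this ray re-enters a fixed neighbourhood of $v$ after first leaving $B_r$ tend to $0$ as $r \to \infty$, uniformly in $n$ for large $n$. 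Choosing $r$ so that the cylinder's edges lie in $B_{r/2}$, on the complementary `good' event the tested indicator is a bounded local function of the restriction of the configuration to $B_r$ that is identical for $H_n$ and for $G$; weak convergence $\OUST_{H_n} \Rightarrow \OWUSF_G$ then passes this local expectation to the limit, and letting $r \to \infty$ removes the error. This establishes $\nu_n \Rightarrow \nu$ and hence the proposition.
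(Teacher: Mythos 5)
Your proposal is correct and, at its core, follows the paper's own strategy: verify detailed balance for $\OUST_{G_n^*}$ under $p^v$ (your involution pairing of $\orient{e}$ with $d$ oriented away from $v$ is exactly the paper's computation), then pass to the limit along the exhaustion, with Wilson's algorithm and transience deployed to control the one genuinely non-local quantity, namely whether $\head{E}$ lies in the past of $v$ --- an obstacle you correctly identify as the heart of the matter. Where you diverge is the machinery of the limit step. The paper invokes the Skorokhod representation to couple $\orient{T_n}\to\orient{\F}$ almost surely; its key step, \cref{lem:past}, shows $\P(\mathscr{P}\triangle\mathscr{P}_n)\to 0$ by combining convergence of LERW hitting probabilities with the a.s.\ coupling, and the final case analysis takes place on a ball of \emph{random} radius $R$ chosen large enough to contain the tree path from $\head{E}$ to $v$. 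You instead stay at the level of weak convergence of the pair laws $\nu_n$ and rely on a uniform-in-$n$ non-return estimate for the forward ray. That estimate is correct, but its justification is the one step you should make explicit: since the loop-erasure visits vertices in increasing walk-time order, re-entry of the ray implies re-entry of the walk, and since the random walk on $G_n^*$ killed at $\partial_n$ is precisely the walk on $G$ stopped upon first exiting $G_n$, the transient $G$-walk dominates all the $G_n^*$-walks simultaneously, giving the uniformity; one should also take the ``fixed neighbourhood'' in your good event large enough to contain the cylinder's support, so that on the good event the entire path from $\head{E}$ to $v$ (whose reversal the update performs) lies in $B_r$ and the tested functional is computable from the configuration on $B_r$ via your local surrogate. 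Your route buys two small things over the paper's: it avoids the coupling entirely, replacing the symmetric-difference lemma by a quantitative localization, and, because coordinate-symmetry of $\nu_n$ survives weak limits, it yields \emph{reversibility} of $\OWUSF_G$ for $p^v$, strictly more than the stationarity the paper extracts from $U(\orient{T_n},\orient{E})\eqd \orient{T_n}$ together with convergence in probability.
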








\begin{proof} Let $\langle G_n \rangle_{n\geq1}$ be an exhaustion of $G$ by finite networks. We may assume that $G_n$ includes $v$ and all of its neighbours for all $n\geq1$. By the Skorokhod representation theorem, there exist random variables $\langle\orient{T_n}\rangle_{n \geq 1}$ and $\orient{\F}$, defined on some common probability space, such that $\orient{T_n}$ has law $\mathsf{OUST}_{G_n^*}$ for each $n$, $\orient{\F}$ has law $\OWUSF_G$, and $\orient{T_n}$ converges to $\orient{\F}$ almost surely as $n$ tends to infinity. Let $\orient{E}$ be an oriented edge chosen randomly from the set $\{\orient{e}:\tail{e}=v\}$ with probability proportional to its conductance, independently of $\langle \orient{T_n}\rangle_{n\geq1}$ and $\orient{\F}$. We write $\P$ for the probability measure under which $\langle T_n \rangle_{n\geq1}$, $F$ and $E$ are sampled as indicated.

Let $\cT(G_n^*)$ denote the set of spanning trees of $G_n^*$ oriented towards the boundary vertex $\partial_n$. For each $t\in \cT(G_n^*)$ and oriented edge $\orient{e}$ with $\tail{e}=v$, we define the update $U(\orient{t},\orient{e})$ of $\orient{t}$ at $\orient{e}$ by the same procedure (\cref{def:update}) as for $f\in \cF(G)$. We first show that stationarity holds in the finite case.

\begin{lem} $U(T_n,E)\eqd T_n$ for every $n\geq1$. \end{lem}
\begin{proof}[Proof of lemma] Define a Markov chain on $\cT(G_n^*)$, as we did on $\cF(G)$, by
\[ p^v(\orient{t_0},\orient{t_1})= \frac{1}{c(v)}c(\{\orient{e} : \tail{e}=v \text{ and } U(\orient{t_0},\orient{e})=\orient{t_1}\}).\]
The claimed equality in distribution is equivalent to $\OUST_{G_n^*}$ being a stationary measure for $p^v(\hspace{.1em}\cdot\hspace{.225em},\hspace{.05em}\cdot\hspace{.15em})$, and so it suffices to verify that $\OUST_{G_n^*}$ satisfies the detailed balance equations for $p^v(\hspace{.1em}\cdot\hspace{.225em},\hspace{.05em}\cdot\hspace{.15em})$. That is, for every pair of distinct oriented trees $\orient{t_0},\orient{t_1} \in \cT(G_n^*)$, we show that
\[ p^v(\orient{t_0},\orient{t_1})\prod_{e \in t_0} c(e)=p^v(\orient{t_1},\orient{t_0})\prod_{e \in t_1}c(e).\] 
If $p^v(\orient{t_0},\orient{t_1})>0$, then there exists an oriented edge $\orient{g}$ with $\tail{g}=v$ such that $U(\orient{t_0},\orient{g})=\orient{t_1}$. Since the two trees are distinct, $g$ is in $t_1$ but not $t_2$, and so updating $\orient{t_0}$ at any other oriented edge pointing out of $v$ cannot give $\orient{t_1}$. Thus, 
\[ p^v(\orient{t_0},\orient{t_1}) = c(g)/c(v). \]
Let $\orient{d}$ be the edge that is deleted from $\orient{t_0}$ when updating at $\orient{g}$. 
Updating $\orient{t_1}$ at whichever of $d$ or $-d$ has $v$ as its tail gives back $\orient{t_0}$, and so
\[ p^v(\orient{t_1},\orient{t_0}) = c(d)/c(v).\]
Finally, since as unoriented trees $\unorient{t_1}=\unorient{t_0}\cup\{g\}\setminus\{d\}$, we have 
\begin{align*} p^v(\orient{t_0},\orient{t_1})\prod_{e \in t_0} c(e) = \frac{c(g)c(d)}{c(v)}\prod_{e \in \unorient{t_0}\cap\unorient{t_1}} c(e) =p^v(\orient{t_1},\orient{t_0})\prod_{e \in t_1}c(e) \end{align*}
as desired (the products are taken over unoriented edges). 
\end{proof}

To complete the proof, we show that $U(\orient{T_n},\orient{E})$ converges to $ U(\orient{\F},\orient{E})$ in probability. 


Let $r$ be a natural number, and let $R$ be the maximum of $r$ and the length of the longest finite simple path in $\F$ connecting $v$ to one of its neighbours in $G$ that is in the same component as $v$ in $F$. Since $\orient{T_n}$ converges to $\F$ almost surely, there exists a random $N$ such that $\orient{T_n}$ and $\orient{\F}$ coincide on the ball $B_R(v)$ of radius $R$ about $v$ in $G$ for all $n\geq N$. In particular, the unique outward-pointing edge from $v$ in $\orient{\F}$ is also included in $\orient{T_n}$ for all $n\geq N$.

We claim that, with probability tending to one, $\orient{\F}$ and $\orient{T_n}$ agree about whether or not $\head{E}$ is in the past of $v$.
\begin{lem}\label{lem:past}
Let $\mathscr{P}$ and $\mathscr{P}_n$ denote the events $\{\head{E} \text{ is in the past of $v$ in $\orient{\F}$}\}$ and $\{\head{E} \text{ is in the past of $v$ in $\orient{T_n}$}\}$ respectively. Then the probability of the symmetric difference $\mathscr{P}\triangle\mathscr{P}_n$ converges to zero as $n\to\infty$.
\end{lem}
\begin{proof}[Proof of lemma]
Given $\orient{E}$, the probability that $\head{E}$ is in the past of $v$ in $\orient{T_n}$ is, by Wilson's algorithm, the probability that $v$ is contained in the loop-erasure of a random walk from $\head{E}$ to $\partial_n$ in $G_n^*$. Since $G$ is transient, this probability converges to the probability that $v$ is contained in the loop-erased random walk from $\head{E}$ in $G$. This probability is exactly the probability that $\head{E}$ is in the past of $v$ in $\orient{\F}$, and so 
\begin{equation}\label{eq:past1}\nonumber \P ( \mathscr{P}_n) \xrightarrow[n \to \infty]{} \P( \mathscr{P}).\end{equation}
If $\P(\mathscr{P})\in\{0,1\}$, we are done. Otherwise, on the event $\mathscr{P}$, there is by definition a finite directed path from $\head{E}$ to $v$ in $\orient{\F}$. This directed path is also contained in $\orient{T_n}$ for all $n\geq N$ and so
\begin{equation*}\label{eq:past2}\nonumber  \P ( \mathscr{P}_n \,| 
 \mathscr{P} ) \xrightarrow[n\to\infty]{}1. \end{equation*}
Combining these two above limits gives 
\[\P(\mathscr{P}_n \,|\, \neg \mathscr{P} ) = \frac{\P(\mathscr{P}_n)-\P(\mathscr{P}_n\,|\,\mathscr{P})\P(\mathscr{P})}{\P(\neg\mathscr{P})} \xrightarrow[n \to \infty]{}0. \]
and hence
\begin{align*}\P(\mathscr{P}{\triangle}\mathscr{P}_n) &= \P(\mathscr{P})  - \P(\mathscr{P}\cap\mathscr{P}_n) + \P(\mathscr{P}_n\cap\lnot\mathscr{P})\\
&= \P(\mathscr{P})  - \P(\mathscr{P}_n \,|\,\mathscr{P})\P(\mathscr{P}) + \P(\mathscr{P}_n \,|\,\neg\mathscr{P})\P(\neg\mathscr{P})\\
& \xrightarrow[n \to\infty]{} \P(\mathscr{P}) - \P(\mathscr{P}) + 0 = 0.\end{align*}
as desired.
\end{proof}

Thus, it suffices to show that $U(\orient{\F},\orient{E})$ and $U(\orient{T_n},\orient{E})$ coincide on $B_R(v)$ on the event that $n\geq N$ and that $\orient{\F}$ and $\orient{T_n}$ agree about whether or not $\head{E}$ is in the past of $v$, since this event has probability tending to 1 as $n$ tends to infinity.

First consider the event that $\head{E}$ is not in the past of $v$ in both $\orient{\F}$ and $\orient{T_n}$ and that $n\geq N$. In this case, updating either $\orient{\F}$ or $\orient{T_n}$ at $\orient{E}$ has the identical effect of deleting the unique outward-pointing edge from $v$ (which coincides in $\orient{\F}$ and $\orient{T_n}$), adding $\orient{E}$, and leaving all orientations unchanged. So $U(\orient{\F},\orient{E})$ and $U(\orient{T_n},\orient{E})$ coincide in $B_R(v)$ on this event. 

Next consider the event that $\head{E}$ is in the past of $v$ in both $\orient{\F}$ and $\orient{T_n}$ and that $n\geq N$. Then there is a directed path $\langle \orient{d_1},\ldots,\orient{d_k}\rangle$ from $\head{E}$ to $v$ in $\orient{\F}$. By the definition of $R$, this path is also contained in $\orient{T_n}$. It follows that updating either $\orient{\F}$ or $\orient{T_n}$ at $\head{E}$ has the identical effect of deleting $\orient{d_k}$, adding $\orient{E}$, and reversing the orientation of the path $\langle \orient{E},\orient{d_1},\ldots,\orient{d_{k-1}}\rangle$. Consequently, the updates $U(\orient{\F},\orient{E})$ and $U(\orient{T_n},\orient{E})$ coincide on $B_R(v)$ on this event, completing the proof. \qedhere

\end{proof}
\subsection{Update-tolerance}
For each edge $\orient{e}$ and event $\mathscr{A}\subset\orient{\cF}(G)$, we define the event $U(\mathscr{A},\orient{e})$ to be the image of $\mathscr{A}$ under the update $U(\,\cdot\,,\orient{e})$. That is, \[ U(\mathscr{A},\orient{e}) = \{ U(\orient{f},\orient{e}) : \orient{f} \in \mathscr{A}\}. \]
An immediate consequence of \cref{prop:transientupdate} is that the probability of $U(\mathscr{A},e)$ cannot be much smaller than the probability of $\mathscr{A}$.
\begin{corollary}\label{cor:updatetolerance}Let $G$ be a transient network and let $e$ be an oriented edge of $G$. Then for every event $\mathscr{A}\subset \cF(G)$,
\[ \OWUSF_G(U(\mathscr{A},e)) \geq \frac{c(e)}{c(e^-)}\OWUSF_G(\mathscr{A}). \]
\end{corollary}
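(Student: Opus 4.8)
The plan is to read the bound off from a single step of the wired cycle-breaking dynamics rooted at the tail $v := \tail{e} = e^-$ of $e$, started from the stationary measure $\OWUSF_G$. Recall that one step of this chain takes a forest $\orient{f}$, selects an oriented edge $\orient{E}$ from $\{\orient{e}' : \tail{e}'=v\}$ with probability proportional to conductance, and moves to $U(\orient{f},\orient{E})$. Since $e$ itself lies in this set and the total conductance of the set equals $c(v)=c(e^-)$ (self-loops counted twice), the event $\{\orient{E}=e\}$ has probability exactly $c(e)/c(e^-)$. The key point is that by rooting at $v$ we make the specific update at $e$ one of the available moves, while stationarity lets us compare the law before and after the move.

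Concretely, I would sample $\orient{F}$ with law $\OWUSF_G$ together with an independent $\orient{E}$ as above, and set $\orient{F}':=U(\orient{F},\orient{E})$. By \cref{prop:transientupdate}, $\orient{F}'$ again has law $\OWUSF_G$, so $\OWUSF_G(U(\mathscr{A},e))=\P(\orient{F}' \in U(\mathscr{A},e))$. The core observation is the inclusion of events
\[ \{\orient{F} \in \mathscr{A}\} \cap \{\orient{E}=e\} \;\subseteq\; \{\orient{F}' \in U(\mathscr{A},e)\}, \]
which holds because $\orient{E}=e$ forces $\orient{F}'=U(\orient{F},e)$, and then $\orient{F}\in\mathscr{A}$ forces $\orient{F}'=U(\orient{F},e)\in U(\mathscr{A},e)$ by the very definition of the image event. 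Taking probabilities and using the independence of $\orient{E}$ and $\orient{F}$ yields
\[ \OWUSF_G(U(\mathscr{A},e)) \;\geq\; \P(\orient{F}\in\mathscr{A})\,\P(\orient{E}=e) \;=\; \frac{c(e)}{c(e^-)}\,\OWUSF_G(\mathscr{A}), \]
which is exactly the claim.

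The only real subtlety, and the step I expect to require the most care, is measurability: the image $U(\mathscr{A},e)$ of a measurable set under the (non-injective) map $U(\,\cdot\,,e)$ need not be Borel a priori. I expect this to be a minor obstacle rather than a genuine one, handled by noting that $U(\,\cdot\,,e)$ alters a forest only on a finite random set of edges---it adds $e$, deletes the outward-pointing edge of $v$, and reverses a finite directed path---so it is a continuous, indeed local, map. One can therefore either verify directly that $U(\mathscr{A},e)$ is measurable or interpret $\OWUSF_G(U(\mathscr{A},e))$ as an outer measure, under which the displayed inequality still follows: equality in distribution preserves outer measure, and the event $\{\orient{F}\in\mathscr{A}\}\cap\{\orient{E}=e\}$ bounding it from within is genuinely measurable.
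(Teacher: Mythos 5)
Your proposal is correct and is precisely the argument the paper intends when it calls the corollary an ``immediate consequence'' of \cref{prop:transientupdate}: run one step of the dynamics rooted at $v=\tail{e}$ from stationarity, note $\P(\orient{E}=e)=c(e)/c(e^-)$, and use the inclusion $\{\orient{F}\in\mathscr{A}\}\cap\{\orient{E}=e\}\subseteq\{U(\orient{F},\orient{E})\in U(\mathscr{A},e)\}$ together with independence. Your added remark on the measurability of the image event $U(\mathscr{A},e)$ is a sensible piece of care that the paper passes over silently, and your outer-measure fallback handles it adequately.
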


We refer to this property as \textbf{update-tolerance} by analogy to the well-established theories of insertion- and deletion-tolerant invariant percolation processes \cite{LP:book}*{Chapters 7 and 8}.

\section{Proof of \cref{thm:threeends}.}\label{sec:threeendsproof}

\begin{proof} Let $G$ be a network such that the $\WUSF$ on $G$ contains at least two two-ended connected components with positive probability. Since $G$'s $\WUSF$ is therefore disconnected with positive probability, Wilson's algorithm implies that $G$ is necessarily transient. The \defo{trunk} of a two-ended tree is defined to be the unique bi-infinite simple path contained in the tree, or equivalently the set of vertices and edges in the tree whose removal disconnects the tree into two infinite connected components. 

Let $\orient{\F}_0$ be a sample of $\OWUSF_G$. By assumption, there exists a (non-random) path $\langle\gamma_i\rangle_{i=0}^n$ in $G$ such that with positive probability,
 $\gamma_0$ and $\gamma_n$ are in distinct two-ended components of $\orient{\F}_0$, 
$\gamma_n$ is in the trunk of its component, 
and  $\gamma_i$ is not in the trunk of $\gamma_n$'s component for $i<n$.
Write $\mathscr{A}_\gamma$ for this event.

For each $1\leq i\leq n$, let $e_i$ be an edge with $\tail{e_i}=\gamma_i$ and $\head{e_i}=\gamma_{i-1}$, and let $\F_i\in\cF(G)$ be defined recursively by \[F_{i+1}=U(F_i,e_{i+1}).\]  We claim that on the event $\mathscr{A}_\gamma$, the component containing $\gamma_n$ in the updated forest $\F_n$ has at least three ends. Applying update-tolerance (\cref{cor:updatetolerance}) iteratively will then imply that the probability of the $\WUSF$ containing a component with three or more ends is at least
\[ \OWUSF_G(\mathscr{A}_\gamma)\prod_{i=1}^n\frac{c(e_i)}{c(\gamma_i)} \]
which is positive as claimed.


\begin{figure}[t]
\centerline{
 \begin{minipage}{1.1\textwidth}
 \centering
\includegraphics[width=1\textwidth]{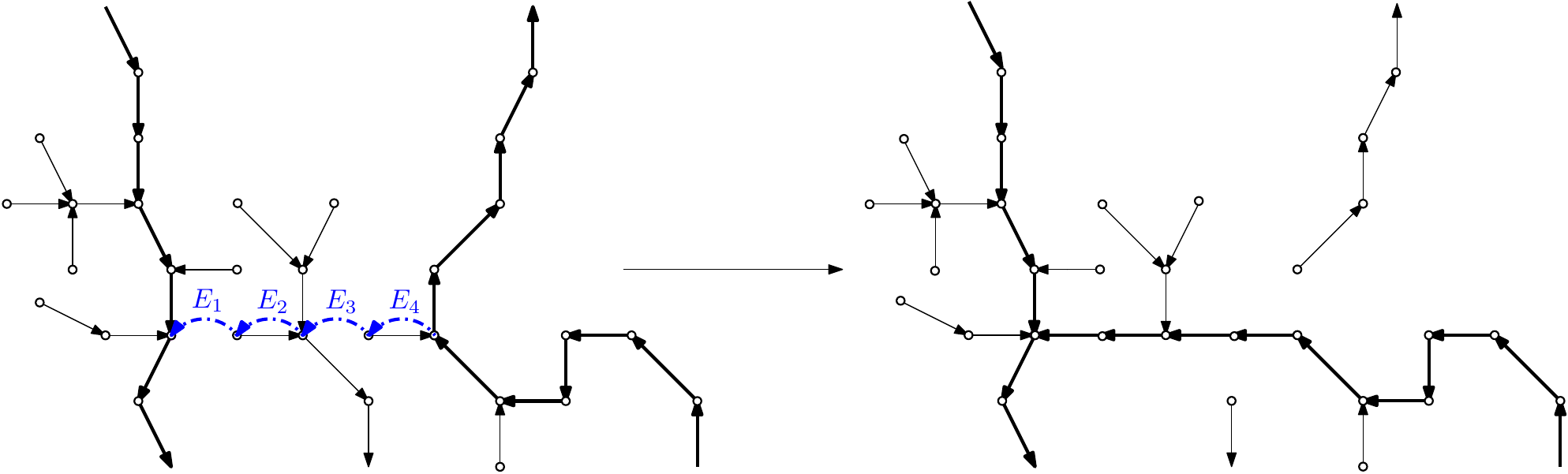}
\caption{
	\small{
		When we update along a path (blue arcs) connecting a two-ended component to the trunk of another two-ended component (with each edge oriented backwards), a three-ended component is created. Edges whose removal disconnects their component into two infinite connected components are bold.
	}
}
 \end{minipage}
 }
\end{figure}

First, notice that $\gamma_i$'s component in $\orient{\F_i}$ has at least two ends for each $0\leq i \leq n$. This may be seen by induction on $i$. The component of $\gamma_0$ in $\orient{\F_0}$ is two-ended by assumption, while for each $0\leq i < n$:
\begin{itemize}
\item If $\gamma_{i+1}$ is in the same component as $\gamma_i$ in $\orient{\F_i}$, then the component containing $\gamma_{i+1}$ in the updated forest $\orient{\F}_{i+1}$ has the same number of ends as the component of $\gamma_i$ in $\orient{\F_i}$.
\item If $\gamma_{i+1}$ is in a different component to $\gamma_i$ in $\orient{\F_i}$, then the component containing $\gamma_{i+1}$ in $\orient{\F_{i+1}}$ consists of the component of $\gamma_{i}$ in $\orient{\F_i}$, the edge $\orient{E_i}$ and the past of $\gamma_{i+1}$ in $\orient{\F_i}$. Thus, the component of $\gamma_{i+1}$ in $\orient{\F_{i+1}}$ has at least as many ends as the component of $\gamma_i$ in $\orient{\F_i}$.  
\end{itemize}
Now, $\gamma_i$ is not in the trunk of $\gamma_n$'s component in $\orient{\F_0}$ for $i<n$, and so in particular $\gamma_n$ is not in the past of $\gamma_i$ in $\F_{i-1}$ for any $i<n$. It follows that $\gamma_{n-1}$ and $\gamma_n$ are in different components of $\F_{n-1}$, and that the trunk of $\gamma_n$'s component in $\F_0$ is still contained in $\F_{n-1}$. From this, we see that $\gamma_n$'s component in $\F_n$ has at least three ends as claimed, see Figure 1.
\end{proof}

\section{Reversible random networks and the proof of Theorem 1.}\label{sec:thm1proof}
A \defo{rooted network} $(G,\rho)$ is a network $G$ together with a distinguished vertex $\rho$, the root. An isomorphism of graphs is an isomorphism of rooted networks if it preserves the conductances and the root.
A \defo{random rooted network} $(G,\rho)$ is a random variable taking values in the space of isomorphism classes of random rooted networks (see \cite{AL07} for precise definitions, including that of the topology on this space). Similarly, we define \defo{doubly-rooted networks} to be networks together with an ordered pair of distinguished vertices. Let $(G,\rho)$ be a random rooted network and let $\langle X_n\rangle_{n\geq0}$ be simple random walk on $G$ started at $\rho$. We say that $(G,\rho)$ is \defo{reversible} if the random doubly-rooted networks $(G,\rho,X_n)$ and $(G,X_n,\rho)$ have the same distribution
\[ (G,\rho,X_n) \eqd (G,X_n,\rho) \]
for every $n$, or equivalently for $n=1$. Be careful to note that this is not the same as the reversibility of the random walk on $G$, which holds for any network. Reversibility is essentialy equivalent to the related property of \defo{unimodularity}. We refer the reader to  \cite{AL07} for a systematic development and overview of the beautiful theory of reversible and unimodular random rooted graphs and networks, as well as many examples.


\medskip

We now deduce \cref{thm:one-endunimod} from \cref{thm:threeends}. Our proof that the $\WUSF$ cannot have a unique two-ended component is adapted closely from Theorem 10.3 of \cite{BLPS}.
\begin{proof}[Proof of \cref{thm:one-endunimod}]
Let $(G,\rho)$ be a reversible random rooted network such that $\E[c(\rho)^{-1}]<\infty$. Biasing the law of $(G,\rho)$ by the inverse conductance $c(\rho)^{-1}$ (that is, reweighting the law of $(G,\rho)$ by the Radon-Nikodym derivative $c(\rho)^{-1}/\E[c(\rho)^{-1}]$) gives an equivalent unimodular random rooted network, as can be seen by checking involution invariance of the biased measure \cite[Proposition 2.2]{AL07}. This allows us to apply Theorem 6.2 and Proposition 7.1 of \cite{AL07} to deduce that every component of the $\WUSF$ of $G$ has at most two ends almost surely.
\cref{thm:threeends} then implies that the $\WUSF$ of $G$ contains at most one two-ended component almost surely. 


Suppose for contradiction that the $\WUSF$ contains a single two-ended component with positive probability. Recall that the trunk of this component is defined to be the unique bi-infinite path in the component, which consists exactly of those edges and vertices whose removal disconnects the component into two infinite connected components. 

Let $\langle X_n \rangle_{n\geq0}$ be a random walk on $G$ started at $\rho$, and let $\F$ be an independent random spanning forest of $G$ with law $\WUSF_G$, so that (since $\WUSF_G$ does not depend on the choice of exhaustion of $G$) the sequence $\langle (G,X_n,F)\rangle_{n\geq0}$ is stationary. If the trunk of $F$ is at some distance $r$ from $\rho$, then $X_r$ is in the trunk with positive probability, and it follows by stationarity that $\rho$ is in the trunk of $\F$ with positive probability. We will show for contradiction that in fact the probability that the root is in the trunk must be zero.

Recall that for each $n$, the forest $\F$ may be sampled by running Wilson's algorithm rooted at infinity, starting with the vertices $\rho$ and $X_n$. If we sample $\F$ in this way and find that both $\rho$ and $X_n$ are contained in $\F$'s unique trunk, we must have had either that the random walk started from $\rho$ hit $X_n$, or that the random walk started from $X_n$ hit $\rho$. Taking a union bound,
\[\P(\rho \text{ and } X_n \text{ in trunk}) \leq \begin{array}{l}\hspace{.785em}\P(\text{random walk started at $X_n$ hits $\rho$})\\+\P(\text{random walk started at $\rho$ hits $X_n$}).\end{array} \]
By reversibility, the two terms on the right hand side are equal and hence
\begin{equation*}\P(\rho \text{ and } X_n \text{ in trunk}) \leq 2\P(\text{random walk started at $X_n$ hits $\rho$}). \end{equation*}
The probability on the right hand side is now exactly the probability that simple random walk started at $\rho$ returns to $\rho$ at time $n$ or greater, and by transience this converges to zero. Thus,
\begin{equation*}\P(\rho \text{ and } X_n \text{ in trunk}) = \E\left[\mathbbm{1}(\rho \text{ in trunk})\mathbbm{1}(X_n \text{ in trunk})\right] \xrightarrow[n \to \infty]{} 0\end{equation*}
and so
\begin{equation}\label{eq:transience}\tag{$\star$} \E\left[\mathbbm{1}(\rho \text{ in trunk})\frac{1}{n}\sum_1^n\mathbbm{1}(X_i \text{ in trunk})\right] \xrightarrow[n \to \infty]{} 0. \end{equation}
Let $\mathcal{I}$ be the invariant $\sigma$-algebra of the stationary sequence $\langle (G,X_n,F) \rangle_{n\geq0}$. The Ergodic Theorem implies that
\[ \frac{1}{n}\sum_1^n\mathbbm{1}(X_i \text{ in trunk}) \xrightarrow[n \to \infty]{\text{a.s.}} \P(\rho \text{ in trunk} \,|\, \mathcal{I}\,).\]
Finally, combining this with \eqref{eq:transience} and the Dominated Convergence Theorem gives
\[\E\left[\mathbbm{1}(\rho \text{ in trunk})\cdot \P(\rho \text{ in trunk}\,|\,\mathcal{I}\,)\right] = \E\left[\P(\rho \text{ in trunk}\,|\,\mathcal{I}\,)^2\right]=0. \]
It follows that $\P(\rho \text{ in trunk})=0$, contradicting our assumption that $\F$ had a unique two-ended component with positive probability.
\end{proof}

\begin{proof}[Proof of {\cref{cor:GW}}.]\label{ex:Galton-Watsontrees} 
 Given a probability distribution $\langle p_k ;\; k\geq0\rangle$ on $\N$, the \textbf{augmented Galton-Watson tree} $T$ with offspring distribution $\langle p_k \rangle$ is defined by taking two independent Galton-Watson trees $T_1$ and $T_2$, both with offspring distribution $\langle p_k \rangle$, and then joining them by a single edge between their roots. Lyons, Pemantle and Peres~\cite{LPP95} proved that $T$ is reversible when rooted at the root of the first tree $T_1$, see also \cite{AL07}*{Example 1.1}.

If the distribution $\langle p_k \rangle$ is supercritical (i.e.~ has expectation greater than 1), the associated Galton-Watson tree is infinite with positive probability and on this event is almost surely transient \cite{LP:book}*{Chapter 16}. Thus, \cref{thm:one-endunimod} implies that every component of $T$'s $\WUSF$ is one-ended almost surely on the event that either $T_1$ or $T_2$ is infinite.

Recall that for every graph $G$ and every edge $e$ of $G$ which has a positive probability of not being included in $G$'s $\WUSF$, the law of $G$'s $\WUSF$ conditioned not to contain $e$ is equal to $\WUSF_{G\setminus\{e\}}$ \cite{BLPS}*{Proposition 4.2}. 
Let $e$ be the edge between the roots of $T_1$ and $T_2$ that was added to form the augmented tree $T$. On the positive probability event that $T_1$ and $T_2$ are both infinite, running Wilson's algorithm on $T$ started from the roots of $T_1$ and $T_2$ shows, by transience of $T_1$ and $T_2$,  that $e$ has  positive probability not to be included in $T$'s $\WUSF$. On this event, $T$'s $\WUSF$ is distributed as the union of independent samples of $\WUSF_{T_1}$ and $\WUSF_{T_2}$. It follows that every component of $T_1$'s $\WUSF$ is one-ended almost surely on the event that $T_1$ is infinite.
\end{proof}

\begin{example}[{$\E[c(\rho)^{-1}]<\infty$} is necessary]\label{ex:counterexample}
Let $(T,o)$ be a 3-regular tree with unit conductances rooted at an arbitrary vertex $o$. Form a network $G$ by adjoining to each vertex $v$ of $T$ an infinite path, and setting the conductance of the $n$th edge in each of these paths to be $2^{-n-1}$. Let $o_n$ be the $n$th vertex in the added path at $o$. Define a random vertex $\rho$ of $G$  which is equal to $o$ with probability $4/7$ and equal to the $n$th vertex in the path at $o$ with probability $3/(7\cdot2^{n})$ for each $n\geq1$. The only possible isomorphism classes of $(G,\rho,X_1)$ are of the form $(G,o_{n},o_{n+1})$, $(G,o_{n+1},o_n)$, $(G,o,o_1)$, $(G,o_1,o)$, or $(G,o,o')$, where $o'$ is a neighbour of $o$ in $T$. This allows us to easily verify that $(G,\rho)$ is a reversible random rooted network:
\[ \P((G,\rho,X_1)=(G,o_n,o_{n+1})) = \P((G,\rho,X_1)=(G,o_{n+1},o_n)) = \frac{1}{7\cdot 2^n} 
\]
for all $n\geq1$ and
\[ \P((G,\rho,X_1)=(G,o,o_1))  = \P((G,X_1,\rho)=(G,o,o_1)) = \frac{1}{7}.\]
 

When we run Wilson's algorithm on $G$ started from a vertex of $T$, every excursion of the random walk into one of the added paths is erased almost surely. It follows that the $\WUSF$ on $G$ is simply the union of the $\WUSF$ on $T$ with each of the added paths, and hence every component has infinitely many ends almost surely.
\end{example}


\subsection*{Acknowledgements}
We thank Tel Aviv University for its hospitality while this work was completed. We also thank Omer Angel and Asaf Nachmias for many valuable comments on earlier versions of the paper.

\footnotesize{
	\bibliographystyle{abbrv}
	\bibliography{unimodular}
 }
\end{document}